\numberwithin{equation}{section}
\newtheorem{theorem}[equation]{Theorem}
\newtheorem{lemma}[equation]{Lemma}
\newtheorem{proposition}[equation]{Proposition}
\newtheorem{corollary}[equation]{Corollary}
\newtheorem{question}[equation]{Question}
\newcommand{\FLEX}{\relax}
\newcommand{\flex}[1]{\renewcommand{\FLEX}{#1}}
\newtheorem{flexthm}[equation]{\FLEX}
\theoremstyle{definition}
\newtheorem{definition}[equation]{Definition}
  \long \def \DavidRemark #1  \endDavidRemark {\color{red}\vskip 6pt \hrule
    \vskip 6pt  \par \noindent \bf
  Remarks for Ruy \& Vrej: \rm #1 \vskip 6pt \hrule \par \vskip 6pt \color{black}}
\long\def\Quiet #1\endQuiet{\relax}
\def\INT{\mathcal I}
\newenvironment{remark}[1]{\refstepcounter{equation}%
\vskip 5pt \par\noindent {\bf #1\ \theequation .}}{\vskip 5pt \par}
\newcommand{\Ad}{\operatorname{Ad}}
\newcommand{\aut}{\operatorname{Aut}}
\newcommand{\bh}{\ensuremath{{\mathcal B}({\mathcal H})}}
\newcommand{\cstar}{\hbox{$C^*$}}
\newcommand{\cstaralg}{$C^*$-algebra}
\newcommand{\dstext}[1]{\quad\text{#1}\quad}
\newcommand{\eps}{\ensuremath{\varepsilon}}
\newcommand{\id}{\text{id}}
\newcommand{\idealin}{\unlhd}
\newcommand{\innerprod}[1]{\left\langle #1\right\rangle}
\newcommand{\norm}[1]{\left\|{#1}\right\|}
\providecommand{\qed}%
{\hfill \vrule height5pt width4pt depth1pt \vspace{+2.00ex}}
\newcommand{\ran}{\operatorname{range}}
\newcommand{\spn}{\operatorname{span}}
\newcommand{\bbC}{{\mathbb{C}}}
\newcommand{\bbE}{{\mathbb{E}}}
\newcommand{\bbF}{{\mathbb{F}}}
\newcommand{\bbT}{{\mathbb{T}}}
\newcommand{\bbZ}{{\mathbb{Z}}}
  \newcommand{\A}{{\mathcal{A}}}
  \newcommand{\B}{{\mathcal{B}}}
  \newcommand{\C}{{\mathcal{C}}}
  \newcommand{\D}{{\mathcal{D}}}
  \newcommand{\E}{{\mathcal{E}}}
\renewcommand{\H}{{\mathcal{H}}}
  \newcommand{\I}{{\mathcal{I}}}
  \newcommand{\K}{{\mathcal{K}}}  
\renewcommand{\L}{{\mathcal{L}}}
  \newcommand{\M}{{\mathcal{M}}}
  \newcommand{\N}{{\mathcal{N}}}
\renewcommand{\S}{{\mathcal{S}}}
\newcommand{\fA}{{\mathfrak{A}}}
\newcommand{\fB}{{\mathfrak{B}}}
\newcommand{\nXG}{\text{Norm}(X,\Gamma,\alpha)}
\newcommand{\so}{\text{SO}(3)}
\begin{document}

\title{Exotic Ideals in Free Transformation Group $\mathbf C^*$-Algebras}
\author{Ruy Exel}
\thanks{R. Exel was partially supported by CNPq, Brazil.}
\address{Universidade Federal de Santa Catarina}
\email{exel@mtm.ufsc.br}
\author{David R. Pitts}
\thanks{D.\ Pitts was supported in
  part by a Simons Foundation Collaboration Grant \#316952.}
\address{University of Nebraska--Lincoln}
\email{dpitts2@unl.edu}
\author{Vrej Zarikian}
\address{U. S. Naval Academy}
\email{zarikian@usna.edu}

\subjclass[2010]{46L55}

\begin{abstract}
  Let $\Gamma$ be a discrete group acting freely via homeomorphisms on
  the compact Hausdorff space $X$ and let $C(X) \rtimes_\eta \Gamma$
  be the completion of the convolution algebra $C_c(\Gamma,C(X))$ with
  respect to a \cstar-norm $\eta$. A non-zero ideal
  $J \idealin C(X) \rtimes_\eta \Gamma$ is \emph{exotic} if
  $J \cap C(X) = \{0\}$. We show that exotic ideals are
  present whenever $\Gamma$ is non-amenable and there is an invariant
  probability measure on $X$.  This fact, along with the recent theory
  of exotic crossed product functors, allows us to provide answers to
  two questions of K. Thomsen.

Using the Koopman representation and a recent theorem of Elek, we show
that when $\Gamma$ is a countably-infinite group having property (T)
and $X$ is the Cantor set, there exists a free and minimal action of
$\Gamma$ on $X$ and a \cstar-norm $\eta$ on $C_c(\Gamma, C(X))$ such
that $C(X)\rtimes_\eta\Gamma$ contains the compact operators as an
exotic ideal.   We use this example to provide a positive answer to a question of A. Katavolos and V. Paulsen.

The
opaque and grey ideals in $C(X)\rtimes_\eta \Gamma$ have trivial
intersection with $C(X)$, and a result
from~\cite{ExelPittsChGrC*AlNoHaEtGr} shows they
coincide when the   action of $\Gamma$ is free, however the
problem of whether these ideals can be non-zero was left
unresolved. We present an example of a free action of $\Gamma$
on a compact Hausdorff space $X$ along with a \cstar-norm $\eta$ for
which these ideals are non-trivial, in particular,  they are exotic ideals.
\end{abstract}

\maketitle


\section{Introduction}

An \textit{inclusion} is a pair $(\C,\D)$ of \cstaralg s, with
$\D \subseteq \C$ (we write the larger algebra first!).  We will
always assume that inclusions have the property that $\D$ contains a
unit for $\C$. It is easy to see the equivalence of the following
statements:
\begin{quote} \it
\begin{itemize}
\item[i)] A $*$-representation $\pi$ of $\C$ is faithful if and only if $\pi|_\D$ is faithful.
\item[ii)] If $J \idealin \C$ is an ideal, then $J \cap \D = \{0\}$ if and only if $J = \{0\}$.
\end{itemize}
\end{quote}
Inclusions satisfying either of these conditions are said to have the
\textit{ideal intersection property}, or said to \textit{detect
  ideals}~\cite{KwasniewskiMeyerApToFrPuOu:FrGrAcFeBu},  and a number of important
classes of inclusions possess this property. For example, when $\D$ is
a Cartan MASA in $\C$, $(\C,\D)$ has the ideal intersection
property. The ideal intersection property is of particular interest in
the theory of graph \cstaralg s:  theorems establishing this property
are often called ``uniqueness theorems''.

A non-zero ideal  $J \idealin \C$ is an \textit{exotic ideal} for $(\C,\D)$ if $J \cap \D = \{0\}$. Clearly, $(\C,\D)$ has the ideal intersection property if and only if it has no exotic ideals.

In this note, our primary interest is in exotic ideals for inclusions
$(\C,\D)$ which arise from topological dynamical systems
$(X,\Gamma, \alpha)$; here $X$ is a compact Hausdorff space, $\Gamma$
is a discrete group, and $\Gamma \ni t \mapsto \alpha_t$ is a
homomorphism of $\Gamma$ into the group of homeomorphisms of $X$. We
will refer to $(X,\Gamma,\alpha)$ as a \textit{transformation
  group}. Let $\nXG$ be the collection of \cstar-norms on the
convolution algebra $C_c(\Gamma, C(X))$, and for $\eta \in \nXG$,
denote by $C(X)\rtimes_\eta\Gamma$ the completion of
$C_c(\Gamma, C(X))$ with respect to $\eta$. Fixing $\eta$, we obtain
the inclusion, $(\C,\D) = (C(X) \rtimes_\eta \Gamma,C(X))$.

There has been considerable recent work on what are called ``exotic
crossed products'', see for example,
\cite{BaumGuenterWillettExExCrPrBaCoCo, BussEcterhoffWillettExCrPr,
  BussEcterhoffWillettExCrPrBaCoCo,
  KaliszewskiLandstadQuiggExGrC*AlNoDu}.  A brief discussion of exotic
crossed product functors may be found in Section~\ref{AbExId} below.  Given an
action of $\Gamma$ on the \cstaralg\ $\A$, such
a functor produces a norm $\eta$ on $C_c(\Gamma, \A)$ which lies
between the reduced and full crossed product norms, that is, for
every $h\in C_c(\Gamma,\A)$, $\norm{h}_r\leq \eta(h)\leq \norm{h}_f$.
This implies that there is a $*$-epimorphism $\theta$ from the
completion $\A\rtimes_\eta\Gamma$ of $C_c(\Gamma,\A)$ with respect to
$\eta$ onto the reduced crossed product $\A\rtimes_r\Gamma$.  When
$\eta$ is distinct from the reduced norm, $\ker\theta$ is an ideal having trivial
intersection with $\A$, and hence is an exotic ideal.

For an inclusion $(\C,\D)=(C(X)\rtimes_\eta\Gamma, C(X))$, it is entirely
possible that every non-zero proper ideal of $\C$ is exotic: for
example, this occurs when $\Gamma=\bbZ$, $X$ is a singleton, and
$\eta$ is the full \cstar-norm. Indeed, in that case,
$(\C,\D) \simeq (C(\bbT), \bbC I)$. The problem with this example is
that the action of $\Gamma$ is not free. Because of this, the setting
of most interest to us is when $\Gamma$ acts freely on $X$. When the
action of $\Gamma$ on $X$ is free, $C(X) \rtimes_\eta \Gamma$ is
called a \textit{represented free transformation group}. Represented
free transformation groups were originally introduced by
Thomsen~\cite{ThomsenOnFrTrGpC*Al}, but his definition differs from
the one given here. Remark~\ref{justifyterm} below shows that the two
definitions agree.

Thomsen~\cite[Theorem~9]{ThomsenOnFrTrGpC*Al} shows that when $X$ is
connected and $\Gamma$ acts freely, $(X,\Gamma,\alpha)$ can be
recovered from the inclusion of $C(X)$ in $C(X) \rtimes_\eta
\Gamma$. To do this, Thomsen observes that the transformation groupoid
associated to $(X,\Gamma,\alpha)$ is isomorphic to the Weyl groupoid
of the inclusion; freeness of the action allows the recovery of
$\Gamma$, and $X$ can be recovered from $C(X)$ via Gelfand
theory. Thomsen notes that in some cases, for example when $\Gamma$ is
amenable, there is a unique $\eta\in \nXG$, so that there is a unique
\cstaralg\ associated with $(X,\Gamma,\alpha)$.

When $\Gamma$ acts freely on $X$, the reduced norm on
$C_c(\Gamma,C(X))$ is smallest among all norms in $\nXG$,
\cite[Corollary~7.8]{PittsStReInI}, and the full norm is always the
largest. It follows that there exists a unique $*$-epimorphism
$\Lambda_\eta:C(X) \rtimes_\eta \Gamma \twoheadrightarrow C(X)
\rtimes_r \Gamma$ which restricts to the identity on
$C_c(\Gamma,C(X))$. In turn, this gives rise to a canonical
conditional expectation $E_\eta:C(X) \rtimes_\eta \Gamma \to C(X)$,
namely $E_\eta = E_r \circ \Lambda_\eta$, where
$E_r:C(X) \rtimes_r \Gamma \to C(X)$ is the faithful conditional
expectation such that $E_r(\sum_t a_tt) = a_e$,
$a \in C_c(\Gamma,C(X))$. In \cite[Remark~16]{ThomsenOnFrTrGpC*Al},
Thomsen asks two questions: the first is whether there might be
norms other than the reduced and full norms in $\nXG$, and the second
is whether the conditional expectation $E_\eta$ is faithful for
$\eta \in \nXG$ (equivalently, whether $\Lambda_\eta$ is injective for
$\eta \in \nXG$). When the action of $\Gamma$ is not free, it is
well-known that the full and reduced norms need not coincide, so it
perhaps is unsurprising that Thomsen's second question has a negative
answer. Also, given recent work on exotic crossed products, one
expects a positive answer to Thomsen's first question. We verify these
facts in Section~\ref{ThomsenSect}.

A recent theorem of Elek~\cite[Theorem~1]{ElekFrMiAcCoGrInPrMe} shows
that for any countably-infinite discrete group $\Gamma$, there exists
a transformation group $(X,\Gamma,\alpha)$ with $X$ the Cantor set and
which satisfies the following two additional properties: the action of
$\Gamma$ is both free and minimal, and $(X,\Gamma,\alpha)$ admits an
invariant Borel probability measure
$\mu$. Proposition~\ref{non-atomic} shows that $\mu$ is non-atomic,
and if $\kappa: C(X)\rtimes_f \Gamma\rightarrow \B(L^2(X,\mu))$ is the
Koopman representation, the restriction of $\kappa$ to
$C_c(\Gamma,C(X))$ is one-to-one by
Proposition \ref{Koopman facts}. In particular, the function
$\eta(\cdot):=\norm{\kappa(\cdot)}_{B(L^2(X,\mu))}$ belongs to
$\nXG$. It is well known that it is always possible to choose $\mu$ so
that it is ergodic, and when this is done, the Koopman representation
is irreducible.

In the setting of the previous paragraph, when $\Gamma$ has Kazhdan's
property (T), we show that the compact operators on $L^2(X,\mu)$ are
an exotic ideal in $C(X)\rtimes_\eta\Gamma$, despite the fact that
$\K\cap \kappa(C_c(\Gamma,C(X)))=\{0\}$. It follows from these results
that whenever $\fA$ is a non-atomic MASA in \bh\ and $\fB$ is the
norm-closed linear span of the unitary operators $U\in \bh$ such that
$U\fA U^*=\fA$, then $\fB$ contains the compact operators.  This fact
gives a positive answer to a question of Katavolos and Paulsen which
arose in their study of ranges of bimodule mapping, see
\cite[Remark~13]{KatavolosPaulsenOnRaBiPr}.

Interestingly, several of the results in Section~\ref{FCSDS} are valid
for represented free \cstar-dynamical systems where the \cstaralg\ on
which the group acts is not
assumed abelian.  Because the additional
generality does not take too much extra effort and may be useful, we
give those results in this greater generality.

In section \ref{SecOpId} we discuss \emph{opaque ideals} in free \cstar-dynamical systems: recall from
\cite[Definition 2.10.4]{ExelPittsChGrC*AlNoHaEtGr} that, for any regular inclusion of a commutative \cstar-algebra $\A$
into another \cstar-algebra $\B$, the corresponding opaque ideal is defined by \[
    \Delta = \bigcap_{J \in \widehat \A} J\B,
\]
where $\widehat \A$ is the collection of all maximal ideals in $\A$.  In most nicely behaved examples, the opaque
ideal vanishes, and the
problem of whether or not these ideals can be non-zero was left
unresolved in \cite{ExelPittsChGrC*AlNoHaEtGr}.   Based on a result of Zeller-Meier
\cite[Proposition
5.2]{Zeller-MeierPrCrCstAlGrAu}, stated below as Theorem \ref{amenable char}, we exhibit in
\ref{non-trivial-opaque-ideal} an inclusion arising from a free transformation group \cstar-algebra
whose associated opaque ideal is non-zero.

In the final section, we give some additional applications of the
existence of exotic ideals in represented free transformation groups.

This paper is a substantially revised and reorganized version of an earlier
preprint, entitled, ``Exotic ideals in represented free transformation
groups''.  We thank the referee of that preprint for their comments.   

\section{Preliminaries} \label{prelim}

\subsection{Represented Free Transformation Groups}

Let $(X,\Gamma,\alpha)$ be a \textit{free transformation group}; this means that $X$ is a compact Hausdorff space, $\Gamma$ is a discrete group (with unit $e$), and $\alpha$ is a homomorphism of $\Gamma$ into the homeomorphism group of $X$ which is \emph{free}, that is, for every $e \neq s \in \Gamma$ and $x \in X$, $\alpha_s(x) \neq x$. We will usually write $sx$ instead of $\alpha_s(x)$. The homomorphism $\tau:\Gamma \rightarrow \aut(C(X))$ dual to the action of $\Gamma$ on $X$ is:
\begin{equation} \label{taudef}
   \tau_s(a) = a \circ \alpha_{s^{-1}} \dstext{($a\in C(X)$, $s\in\Gamma$).}
\end{equation}
The transformation group $(X,\Gamma,\alpha)$ and the \cstar-dynamical system $(C(X),\Gamma,\tau)$ are equivalent via Gelfand theory.

Let $C_c(\Gamma,C(X))$ be the collection of finitely supported $C(X)$-valued functions on $\Gamma$. With the usual
convolution product and adjoint operation,
\[
    (ab)(t) := \sum_{s \in \Gamma} a(s)\tau_s(b(s^{-1}t)) \dstext{and} (a)^*(t)=\tau_t(a(t^{-1}))^*,
\]
$C_c(\Gamma,C(X))$ becomes a $*$-algebra. Let  $E:C_c(\Gamma,C(X)) \rightarrow C(X)$ be the map
\begin{equation} \label{Edef}
    E(a)=a(e).
\end{equation}
A computation shows $E$ is faithful in the sense that for $a\in C_c(\Gamma, C(X))$,
\begin{equation}\label{EFaith}
    E(a^*a) = 0 \implies a = 0.
\end{equation}
The restriction of $E$ to the subalgebra
\[
    \{a \in C_c(\Gamma, C(X)): a(t) = 0 \text{ for all } t \neq e\}
\]
is an isomorphism onto $C(X)$. We shall identify $f \in C(X)$ with the function in $C_c(\Gamma,C(X))$ taking the value $f$ at $e$ and $0$ at every other $s \in \Gamma$. In this way, we regard $C(X)\subseteq C_c(\Gamma,C(X))$.

For $s \in \Gamma$, let $\delta_s \in C_c(\Gamma, C(X))$ be the function
\[
    \delta_s(t) = \begin{cases}
        {\mathbf 1} & \text{if $t = s$;}\\
        0 & \text{if $t \neq s$,}
    \end{cases}
\]
where $\mathbf 1$ denotes the multiplicative identity in $C(X)$. Then $\delta_s\delta_t=\delta_{st}$ and $\delta_{s^{-1}} = \delta_s^* = (\delta_s)^{-1}$. We may therefore identify $\Gamma$ with the group $\{\delta_s: s \in \Gamma\}$, and this is what is meant when we write $\Gamma\subseteq C_c(\Gamma,C(X))$.

Let $\nXG$ be the family of all \cstar-norms on $C_c(\Gamma,C(X))$. For $\eta \in \nXG$ we denote the completion of
$C_c(\Gamma,C(X))$ with respect to $\eta$ by $C(X) \rtimes_\eta \Gamma$. We remark  that for $f \in C(X)$, uniqueness of
\cstar-norms gives $\eta(f) = \norm{f}_{C(X)}$.


When $\eta$ is the reduced or full \cstar-norm, we will replace the subscript $\eta$ in the crossed product notation with $r$ or $f$ respectively; also we use $\norm{\cdot}_r$ and $\norm{\cdot}_f$ for the reduced and full \cstar-norms. The map $E:C_c(\Gamma,C(X)) \to C(X)$ from \ref{Edef} extends uniquely to a faithful conditional expectation $E_r:C(X) \rtimes_r \Gamma \to C(X)$ (see \cite[Proposition~4.1.9]{BrownOzawaC*AlFiDiAp}).

\begin{definition} \label{rftg} A \textit{represented free transformation group} is a \cstaralg\ having the form \[C(X)\rtimes_\eta\Gamma\] for some freely acting discrete dynamical system $(X,\Gamma,\alpha)$ and $\eta\in \nXG$.

An ideal $J\idealin C(X)\rtimes_\eta \Gamma$ is \textit{exotic} if $J\neq \{0\}$ and $C(X)\cap J=\{0\}$.
\end{definition}

\begin{remark}{Remarks} \label{justifyterm}
  \begin{enumerate}
\item In our discussion of Thomsen's problems in the introduction and
  in Definition~\ref{rftg}, we have paraphrased Thomsen's setting and
  we discuss that now. The term ``represented free transformation
  group'' was originally coined by K. Thomsen, which we reproduce here
  for the convenience of the reader, see
  \cite[Definition~3]{ThomsenOnFrTrGpC*Al}.  Thomsen defines a
  represented free transformation group to be a triple $(B,A,S)$, with
  $B$ a unital \cstaralg\ containing the abelian \cstar-subalgebra $A$
  and $S$ a group of unitaries in $B$ such that: a) for $u\in S$, $uAu^*=A$; b) for
  $I\neq u\in S$,
  $(\Ad u)|_A$ is a free $*$-automorphism of $A$, i.e.\ the
  dual homeomorphism on $\widehat A$ is free; and c) $B=C^*(A\cup S)$.

  While our definition differs from his, the two definitions are equivalent. We sketch a proof of the equivalence. Note that given $\eta \in \nXG$, we have
  \[
        C(X) \subseteq C_c(\Gamma,C(X)) \subseteq C(X) \rtimes_\eta \Gamma \dstext{and} \Gamma \subseteq C_c(\Gamma,C(X)) \subseteq C(X) \rtimes_\eta \Gamma.
    \]
    Then the triple, $(C(X)\rtimes_\eta \Gamma,C(X),\Gamma)$ is a represented free transformation group in Thomsen's sense.

    On the other hand, suppose $(B,A,S)$ is a represented free transformation group in Thomsen's sense. Set $X = \widehat A$, and identify $A$ with $C(X)$ via the Gelfand transform. By \cite[Lemma~11]{ThomsenOnFrTrGpC*Al} there is a conditional expectation $\bbE:B \rightarrow A$ satisfying $\bbE(s) = 0$ for any $e \neq s \in S$. Thus the map $\theta:C_c(S,C(X)) \rightarrow B$ given by
    \[
        \theta(f) = \sum_{s \in S} f(s)s
    \]
    is a $*$-monomorphism with dense range. Therefore, we may define a \cstar-norm on $C_c(S,C(X))$ by $\eta(f) = \norm{\theta(f)}_B$, so that
    \[
        C(X) \rtimes_\eta S \simeq B.
    \]
    In this way, we may regard $(B,A,S)$ as a represented free transformation group in the sense of Definition~\ref{rftg} above.
  \item
When the norm $\eta\in \nXG$ arises from an exotic crossed product,
the kernel of the map $C(X)\rtimes_\eta\Gamma\rightarrow
C(X)\rtimes_r\Gamma$ is an exotic ideal for $(C(X)\rtimes_\eta\Gamma,
C(X)$.   It is this fact which led us to the terminology ``exotic ideal'' found in Definition~\ref{rftg}

  \end{enumerate}
\end{remark}

Returning to the context of free transformation groups, there is no
exotic ideal in $C(X)\rtimes_r\Gamma$.  This follows from
\cite[Theorem 1]{ArchboldSpielbergToFrAcIdDiC*DySy}, but it can also
be deduced from Corollary~\ref{free facts}(f) below and the fact that
$E_r$ is always faithful.  Regarding $C(X)\rtimes_r\Gamma$ as the
reduced \cstaralg\ of the transformation groupoid associated to
$(X,\Gamma,\alpha)$, this fact may further be obtained
from~\cite[Theorem~4.4]{ExelNoHaEtGp}.  Also, there is no exotic ideal
when $\Gamma$ is amenable, since
$C(X)\rtimes_\eta\Gamma = C(X)\rtimes_r\Gamma$ in that case.

Represented free transformation groups are examples of regular inclusions. We pause to establish some conventions and notation which will be useful in the sequel.

\begin{definition} \label{various} Let $(\B,\A)$ be an inclusion.
\begin{enumerate}
\item A \textit{normalizer of $\A$} is an element of the set
\[
    \N(\B,\A) = \{v \in \B: v\A v^* \cup v^*\A v \subseteq \A\}.
\]
Then $\N(\B,\A)$ is a $*$-semigroup and the linear span of $\N(\B,\A)$ is a $*$-algebra.
\item The inclusion $(\B,\A)$ is \textit{regular} if $\spn\N(\B,\A)$ is dense in $\B$ and is \textit{singular} when
$\N(\B,\A) = \A$.

\item An \textit{intertwiner of $\A$} is an element $v \in \B$ such that $v\A = \A v$; we use $\INT(\B,\A)$ for the set of all intertwiners.
\item When $\A$ is a maximal abelian $*$-subalgebra (MASA) in $\B$, we say $(\B,\A)$ is a \textit{MASA inclusion}.
\item When every pure state of $\A$ uniquely extends to a pure state of $\B$, we say that $(\B,\A)$ has the \textit{extension property}, or that $(\B,\A)$ is an \textit{EP-inclusion}. When $(\B,\A)$ is an EP-inclusion and $\A$ is abelian, $\A$ is a MASA in $\B$ and there exists a unique conditional expectation of $\B$ onto $\A$~\cite[Corollary~2.7]{ArchboldBunceGregsonExStC*AlII}.
\item As with represented free transformation groups, we call a non-zero ideal $J \idealin \B$
  \textit{exotic} if $J \cap \A = \{0\}$.
\end{enumerate}
\end{definition}

The notion of pseudo-expectations (see~\cite{PittsStReInI,
  PittsZarikianUnPsExC*In})  will be useful for our work.
Hamana~\cite{HamanaInEnC*Al} showed that given a \textit{unital}
\cstar-algebra $\A$ there is a \cstar-algebra $I(\A)$ and a one-to-one
unital $*$-homomorphism $\iota :\A \rightarrow I(\A)$ such that:
\begin{itemize}
  \item $I(\A)$ is an injective object in the category of operator systems and
    unital completely positive  maps; and
  \item if $\S$ is an injective operator system with
    $\iota(\A)\subseteq \S\subseteq I(\A)$, then $\S=I(\A)$.
\end{itemize}
The pair $(I(\A),\iota)$ is called an \emph{injective
  envelope} of $\A$.  The injective envelope of $\A$ is monotone closed and has
the following uniqueness property: if $(\I_1,\iota_1)$ and
$(\I_2,\iota_2)$ are injective envelopes for $\A$, there exists a
unique $*$-isomorphism $\alpha:\I_1\rightarrow \I_2$ such that
$\alpha\circ \iota_1=\iota_2\circ \alpha$.
When the embedding $\iota$ is fixed for a discussion and there is no
danger of confusion, we will usually identify  $\A$ with $\iota(\A)$;
when this is done, we view $\A$ as a subalgebra of $I(\A)$.
\begin{definition}   Let $(\B,\A)$ be an inclusion.   A
  \textit{pseudo-expectation} for $(\B,\A)$ is a unital
  completely positive map $E:\B\rightarrow I(\A)$ such that
  $E|_\A=\id|_\A$.
\end{definition}
When $\Delta:\B\rightarrow \A$ is a conditional expectation, $\Delta$
is necessarily a pseudo-expectation, so the notion of
pseudo-expectation extends the notion of conditional expectation.
While a conditional expectation need not exist, due to the injectivity
of $I(\A)$, pseudo-expectations always exist.  In general, a given
inclusion has many pseudo-expectations. However, in some important
cases, there is a unique pseudo-expectation.  When this occurs, we say
that $(\B,\A)$ has the \textit{unique pseudo-expectation property}.
To say the inclusion $(\B,\A)$ has the \textit{faithful unique
  pseudo-expectation property} means that $(\B,\A)$ has the unique
pseudo-expectation property and the pseudo-expectation is faithful.
\subsection{Free $C^*$-Dynamical Systems}\label{FCSDS}

Although our main interest is in represented free transformation
groups, many of the basic
facts we will need are true in the greater generality of represented free $C^*$-dynamical systems.

Let $(\A,\Gamma,\tau)$ be a discrete $C^*$-dynamical system, with
$\A$ not assumed abelian. By definition, $\|\cdot\|_f$ is the largest
$C^*$-norm on $C_c(\Gamma,\A)$. In general, there is no smallest
$C^*$-norm on $C_c(\Gamma,\A)$ \cite[Example 7.1]{PittsStReInI},
however there are circumstances when there are minimal \cstar-norms on
certain $*$-subalgebras arising from a regular inclusion.  Some of
these are discussed in~\cite[Section~7]{PittsStReInI}.
Our next goal
is to show that if
$\Gamma \curvearrowright \widehat{\A}$ is free, then $\|\cdot\|_r$ is
the smallest $C^*$-norm on $C_c(\Gamma,\A)$, a fact which is critical
to our analysis.  We remind the reader of some of the relevant notions.

If $\A$ is a \cstaralg, its spectrum, denoted~$\widehat\A$, is the
collection of all unitary equivalence classes of non-zero
$*$-representations of $\A$.  The map $[\pi]\mapsto \ker\pi$ is a
surjection of $\widehat\A$ onto $\text{Prim}(\A)$, the primitive ideal
space, of $\A$, and the topology on $\widehat\A$ is the smallest topology
which makes this map continuous when $\text{Prim}(\A)$ is equipped
with the hull-kernel topology.  When $(\A,\Gamma,\tau)$ is a discrete
\cstar-dynamical system, there is an associated action $\alpha$ of
$\Gamma$ on $\widehat\A$:
\[\alpha_s([\pi])=[\pi\circ\tau_s^{-1}].\]  This action is free when
$e\neq s\implies \alpha_s$ has no fixed points.


A sufficient condition (weaker than freeness of
$\Gamma\curvearrowright\widehat\A$)  for minimality of the reduced norm is that
$\Gamma \curvearrowright \A$ is properly outer in Kishimoto's sense
\cite[Section 2]{KishimotoFrAcAuC*Al}.   By~\cite[Corollary 3.6]{ZarikianUnCoExAbC*In},
the action of $\Gamma$ on $\A$ is Kishimoto properly outer if and only if
the inclusion $(\A \rtimes_f \Gamma, \A)$ has the unique
pseudo-expectation property.   Since we wish to highlight the role of
pseudo-expectations, we state the following result in terms of the unique
pseudo-expectation property instead of using Kishimoto's properly
outer condition.

Before stating the next result, we pause to establish some notation
which we use several times in the sequel.  There is a unique $*$-epimorphism
$\Lambda_f:\A \rtimes_f \Gamma \twoheadrightarrow \A \rtimes_r \Gamma$
which extends the identity map on
$C_c(\Gamma,\A)$; we will call $\Lambda_f$ the \textit{canonical
  $*$-epimorphism}.

We now show that the reduced norm is the smallest \cstar-seminorm on
$C_c(\Gamma,\A)$ whose restriction to $\A$ coincides with the norm on $\A$.
\begin{theorem} \label{smallest norm} Let $(\A,\Gamma,\tau)$ be a
  discrete $C^*$-dynamical system and suppose the inclusion $(\A\rtimes_f\Gamma,\A)$ has the
  unique pseudo expectation property (in particular, this occurs when
  $\Gamma \curvearrowright \widehat{\A}$ is free).   If $\sigma$ is a
  \cstar-seminorm such that for every $a\in \A$, $\sigma(a)=\norm{a}_\A$, then
  for every $x\in C_c(\Gamma,\A)$,
  \[\norm{x}_r\leq \sigma(a).\]
\end{theorem}

\begin{proof}
First note that the unique pseudo-expectation for $(\A \rtimes_f
\Gamma,\A)$ is $E_f = E_r \circ \Lambda_f$, where $E_r:\A \rtimes_r
\Gamma \to \A$ is the canonical faithful conditional expectation.
A straightforward computation shows that $\L_f =
\ker(\Lambda_f)$, where $\L_f = \{x \in \A \rtimes_f \Gamma: E_f(x^*x)
= 0\}$ is the left kernel of $E_f$. Suppose that $J \idealin \A
\rtimes_f \Gamma$ and $J \cap \A = \{0\}$. Then the map $\theta_0:\A+J
\to \A: a+j \mapsto a$ is a well-defined $*$-homomorphism such that
$\theta_0|_{\A} = \id$. Let $\theta:\A \rtimes_f \Gamma \to I(\A)$ be
a UCP (unital completely positive) extension of $\theta_0$. Then $\theta$ is a pseudo-expectation for $(\A \rtimes_f \Gamma,\A)$, and so $\theta = E_f$, by uniqueness. Thus for $j \in J$, we have that
\[
    E_f(j^*j) = \theta(j^*j) = \theta_0(j^*j) = 0,
\]
which implies $J \subseteq \ker(\Lambda_f)$. Now let $\sigma$ be a
$C^*$-seminorm on $C_c(\Gamma,\A)$, let $N\idealin C_c(\Gamma,\A)$ be
the ideal $N=\{x\in C_c(\Gamma,\A): \sigma(x)=0\}$, and let
$\A \rtimes_\sigma \Gamma$ denote the completion of $C_c(\Gamma,\A)/N$
with respect to $\sigma$. Since $\|\cdot\|_f$ is the largest
$C^*$-norm on $C_c(\Gamma,\A)$, there exists a unique $*$-epimorphism
$\pi:\A \rtimes_f \Gamma \twoheadrightarrow \A \rtimes_\sigma \Gamma$
which extends the quotient map of $C_c(\Gamma,\A)$ onto $C_c(\Gamma,\A)/N$. Since
$\ker(\pi) \cap \A = \{0\}$, the above argument shows that
$\ker(\pi) \subseteq \ker(\Lambda_f)$. Thus for
$x \in C_c(\Gamma,\A)$, we have that
\[
    \|x\|_r = \|x + \ker(\Lambda_f)\| \leq \|x + \ker(\pi)\| = \sigma(x).
\]
\end{proof}

Some of the statements (e.g.\ items \eqref{ff6} and \eqref{ff7}) in the
following corollary extend results of~\cite{PittsStReInI} to the
context of freely acting discrete \cstar-dynamical systems.

\begin{corollary} \label{free facts}
Let $(\A,\Gamma,\tau)$ be a discrete $C^*$-dynamical system. If $\Gamma \curvearrowright \widehat{\A}$ is free and $\eta$ is a $C^*$-norm on $C_c(\Gamma,\A)$, then the following statements hold:
\begin{enumerate}
\item\label{ff1} Every pure state on $\A$ extends uniquely to a pure state on $\A \rtimes_\eta \Gamma$.
\item\label{ff2} $(\A \rtimes_\eta \Gamma,\A)$ is a regular inclusion and $\A' \cap (\A \rtimes_\eta \Gamma) = Z(\A)$ (in particular, if $\A$ is abelian, then it is a MASA in $\A \rtimes_\eta \Gamma$).
\item\label{ff3} For every $x \in C_c(\Gamma,\A)$, $\|x\|_r \leq \eta(x) \leq \|x\|_f$.
\item\label{ff4} There exists a unique $*$-epimorphism $\Lambda_\eta:\A \rtimes_\eta \Gamma \twoheadrightarrow \A \rtimes_r \Gamma$ extending the identity map on $C_c(\Gamma,\A)$.
\item\label{ff5} $(\A \rtimes_\eta \Gamma,\A)$ has a unique pseudo-expectation, namely $E_\eta = E_r \circ \Lambda_\eta$.
\item\label{ff6} The ``left kernel'' of $E_\eta$, i.e., the set $\L_\eta := \{x \in \A \rtimes_\eta \Gamma: E_\eta(x^*x) =0\}$, is a closed, two-sided ideal of $\A \rtimes_\eta \Gamma$. Indeed, $\L_\eta = \ker(\Lambda_\eta)$. Obviously $\L_\eta \cap \A = \{0\}$. In fact, $\L_\eta$ has the following maximality property: If $J \idealin \A \rtimes_\eta \Gamma$ and $J \cap \A = \{0\}$, then $J \subseteq \L_\eta$. In particular, if $\A \rtimes_\eta \Gamma$ has an exotic ideal, then $\L_\eta$ is the largest exotic ideal.
\item\label{ff7} If $J \idealin \A \rtimes_\eta \Gamma$ and $J \cap \A = \{0\}$, then $J \cap C_c(\Gamma,\A) = \{0\}$.
\end{enumerate}
\end{corollary}

\begin{proof}
(a) By \cite[Corollary 2.5]{ZarikianPuExPrDiCrPr}, the inclusion $(\A \rtimes_f \Gamma,\A)$ has the extension property (see also \cite[Theorem 2]{AkemannWeaverCoCoNaPr}). Since the identity map $\id:C_c(\Gamma,\A) \to C_c(\Gamma,\A)$ extends uniquely to a $*$-epimorphism $\pi:\A \rtimes_f \Gamma \twoheadrightarrow \A \rtimes_\eta \Gamma$, the inclusion $(\A \rtimes_\eta \Gamma,\A)$ has the extension property \cite[Lemma 3.1]{ArchboldBunceGregsonExStC*AlII}.

(b) Regularity is immediate; the fact that the relative commutant equals the center follows from \cite[Theorem 3.1]{BunceChuUnExPuStC*Al}.

(c) Theorem \ref{smallest norm}.

(d) This follows immediately from (c).

(e) Since $(\A \rtimes_\eta \Gamma,\A)$ has the extension property, it has a unique pseudo-expectation, necessarily $E_\eta = E_r \circ \Lambda_\eta$.

(f) This follows as in the proof of Theorem \ref{smallest norm}.

(g) Suppose $J \idealin \A \rtimes_\eta \Gamma$ and $J \cap \A = \{0\}$. By (f), $J \subseteq \L_\eta = \ker(\Lambda_\eta)$. Thus
\[
    J \cap C_c(\Gamma,\A) \subseteq \ker(\Lambda_\eta) \cap C_c(\Gamma,\A) = \{0\}.
\]
\end{proof}

Let $(\A,\Gamma,\tau)$ be a discrete $C^*$-dynamical system. If
$\Gamma$ is amenable, then the full and reduced norms on
$C_c(\Gamma,\A)$ coincide \cite[Theorem
4.2.6]{BrownOzawaC*AlFiDiAp}. If $(\A,\Gamma,\tau)$ admits an
invariant state $\phi$, that is $\phi\circ\tau_s=\phi$ for every
$s\in\Gamma$,  then the converse is true \cite[Proposition
5.2]{Zeller-MeierPrCrCstAlGrAu}. We provide a short proof for the
reader's convenience---we thank Rufus Willett and Alcides Buss for
showing us the following argument.

\begin{theorem} \label{amenable char}
Let $(\A,\Gamma,\tau)$ be a discrete $C^*$-dynamical system which admits an invariant state $\phi$. If the full and reduced norms on $C_c(\Gamma,\A)$ coincide, then $\Gamma$ is amenable.
\end{theorem}

\begin{proof}
Consider the GNS representation $(\pi_\phi,\H_\phi,\xi_\phi)$ of $\A$ with respect to the invariant state $\phi$. For each $s \in \Gamma$ and $a \in \A$, define $U_s\pi_\phi(a)\xi_\phi = \pi_\phi(\tau_s(a))\xi_\phi$. Then $U_s:\pi_\phi(\A)\xi_\phi \to \pi_\phi(\A)\xi_\phi$ is a well-defined linear isometry, since
\[
  \|\pi_\phi(\tau_s(a))\xi_\phi\|^2 = \phi(\tau_s(a^*a)) =
  \phi(a^*a) = \|\pi_\phi(a)\xi_\phi\|^2.
\]
Thus $U_s$ extends uniquely to an isometry $\H_\phi \to \H_\phi$, again denoted by $U_s$. Since $U_e = I$ and $U_sU_t = U_{st}$ for all $s, t \in \Gamma$, the map $\Gamma \to B(\H_\phi): s \mapsto U_s$ is a unitary representation. Because of the covariance condition $U_s\pi_\phi(a)U_s^* = \pi_\phi(\tau_s(a))$, there exists a unique representation $\kappa_\phi:\A \rtimes_f \Gamma \to B(\H_\phi)$ such that $\kappa_\phi(a) = \pi_\phi(a)$, $a \in \A$, and $\kappa_\phi(s) = U_s$, $s \in \Gamma$. If $\omega_\phi$ denotes the vector state on $B(\H_\phi)$ associated to $\xi_\phi \in \H_\phi$, then
\[
  \omega_\phi(\kappa_\phi(s)) = \omega_\phi(U_s) = \langle
  U_s\xi_\phi, \xi_\phi \rangle = \langle \xi_\phi, \xi_\phi \rangle =
  1, ~ s \in \Gamma.
\]
Now assume that the full and reduced norms on $C_c(\Gamma,\A)$ coincide. If $\iota:C_r^*(\Gamma) \to \A \rtimes_r \Gamma$ denotes the canonical inclusion, then the composition
\[
\xymatrix{
    C_r^*(\Gamma) \ar[r]^\iota \ar@/^3pc/[rrrr]^\psi & \A \rtimes_r \Gamma \ar[r]^\id & \A \rtimes_f \Gamma\ar[r]^{\kappa_\phi} & \B(\H_\phi)\ar[r]^{\omega_\phi} & \bbC
}
\]
defines a character $\psi$ on $C_r^*(\Gamma)$, which implies the amenability of $\Gamma$ \cite[Theorem 2.6.8]{BrownOzawaC*AlFiDiAp}.
\end{proof}

\begin{remark}{Remark} \label{Koopman} Our proof of Theorem
  \ref{amenable char} made use of a particular representation
  $\kappa_\phi:\A \rtimes_f \Gamma \to B(\H_\phi)$ arising from an
  invariant state $\phi$ for $(\A,\Gamma,\tau)$. We will refer to this
  representation of $\A\rtimes_f\Gamma$ as the \emph{Koopman
    representation}, denoted $\kappa_\phi$. That is,
  $\kappa_\phi(a) = \pi_\phi(a)$, $a \in \A$, and
  $\kappa_\phi(s) = U_s$, $s \in \Gamma$. Our choice of terminology is
  justified by the abelian case. Indeed, if $\A = C(X)$ for a compact
  Hausdorff space $X$, $\Gamma$ is a discrete group acting on $X$ by
  homeomorphisms, and $\phi(f) = \int_X f(x)d\mu(x)$ for an invariant
  regular Borel probability measure $\mu$ on $X$, then
  $\H_\phi = L^2(X,\mu)$, $\kappa_\phi(f) = M_f$ (the operator on
  $L^2(X,\mu)$ of multiplication by $f$) for all $f \in C(X)$, and
  $\kappa_\phi(s)\xi = \xi \circ s^{-1}$ (the so-called Koopman
  operator on $L^2(X,\mu)$ corresponding to $s$) for all
  $s \in \Gamma$.
\end{remark}

The following proposition identifies sufficient conditions for the
restriction to $C_c(\Gamma,\A)$ of the
Koopman representation to be one-to-one or irreducible. Recall that an
action $G \curvearrowright X$ of a discrete group on a topological
space by homeomorphisms is \emph{minimal} if the only closed invariant
subsets are $\emptyset$ and $X$, and that an invariant state $\phi$
for a discrete $C^*$-dynamical system $(\A,\Gamma,\tau)$ is
\emph{ergodic} \cite[Section~4.3.1]{BratteliRobinsonOpAlQuStMe1} if it
is extremal among all invariant states.

\begin{proposition} \label{Koopman facts}
Let $\phi$ be an invariant state for the discrete $C^*$-dynamical system $(\A,\Gamma,\tau)$ and $\kappa_\phi:\A\rtimes_f\Gamma \to B(\H_\phi)$ be the corresponding Koopman representation.
\begin{enumerate}
\item[(a)] If $\Gamma \curvearrowright \widehat{\A}$ is free and
  minimal, then the restriction of $\kappa_\phi$ to $C_c(\Gamma,\A)$ is one-to-one.
\item[(b)] If $\phi$ is ergodic, then $\kappa_\phi$ is irreducible.
\end{enumerate}
\end{proposition}

\begin{proof}
  (a) Since $\Gamma \curvearrowright \widehat{\A}$ is minimal, $\A$
  has no nontrivial (closed, 2-sided) $\Gamma$-invariant ideals
  \cite[Proposition 3.2.2]{DixmierC*Al}. Because of the covariance
  relation $U_s\pi_\phi(a)U_s^* = \pi_\phi(\tau_s(a))$, we see that
  $\ker\pi_\phi \idealin \A$ is $\Gamma$-invariant, which implies
  that $\ker(\pi_\phi) = \{0\}$, which in turn implies that
  $\ker(\kappa_\phi) \cap \A = \{0\}$. Since
  $\Gamma \curvearrowright \widehat{\A}$ is free,
  $\ker(\kappa_\phi) \cap C_c(\Gamma,\A) = \{0\}$, by Corollary
  \ref{free facts}, part (g). That is, $\kappa_\phi$ is one-to-one on $C_c(\Gamma,\A)$.

(b) \cite[Theorem 4.3.17]{BratteliRobinsonOpAlQuStMe1}.
\end{proof}

Let $\phi$ be an invariant state for the discrete $C^*$-dynamical
system $(\A,\Gamma,\tau)$. We would like to know when
$\kappa_\phi(\A\rtimes_f\Gamma)$ contains $\K(\H_\phi)$, the
compact operators on $\H_\phi$. In light of Proposition \ref{Koopman
  facts}, part (b), in makes sense to focus on the situation when
$\phi$ is ergodic, in which case, by a standard fact about irreducible
representations of \cstaralg s
(see for example~\cite{ArvesonAnInC*Al}),  it is enough to show that
$\kappa_\phi(\A\rtimes_f\Gamma)$ contains a nonzero compact
operator.

Our next result shows that sufficient hypotheses for
$\kappa_\phi(\A\rtimes_f\Gamma)\cap \K(\H_\phi)\neq \{0\}$ are that $\A$ is abelian and $\Gamma$ is a countable group
with Kazhdan's property (T). The proof is essentially due to Chou,
Lau, and Rosenblatt \cite[Theorem 1.1, (b) $\Rightarrow$
(a)]{ChouLauRosenblattApCoOpSuTr}. We are indebted to Bill Johnson for
alerting us to this reference.

\begin{theorem} \label{contains compacts} Let $X$ be a compact
  Hausdorff space, and $\Gamma$ be a countable group with property
  (T).  Suppose $\phi$ is an ergodic state for the discrete
  $C^*$-dynamical system $(C(X),\Gamma,\tau)$, and let
  $\kappa_\phi$ be the corresponding
  Koopman representation.  Then
  $\K(\H_\phi) \subseteq \kappa_\phi(C(X)\rtimes_f\Gamma)$.
\end{theorem}

\begin{proof}

  Let $\xi_\phi\in \H_\phi$ be the cyclic vector for $\kappa_\phi(C(X))$
  in arising from the GNS representation, that is,
  $\xi_\phi=I_{C(X)}+\N$, where $\N$ is the left kernel of $\phi$.  Let
  $P\in\B(\H_\phi)$ be the rank-one projection whose range is
  $\bbC\xi_\phi$ and set \[\C:=\kappa_\phi(C(X)\rtimes_f\Gamma).\]
  Then $\C\subseteq \B(\H_\phi)$ is irreducible by ergodicity (see
  Proposition~\ref{Koopman facts}(b)).
  Thus, to show $\C$ contains the set of compact operators, it
  suffices to prove $P\in\C$.

Let $\M$ be the norm-closed linear span of
  $\{U_s\}_{s\in\Gamma}$.  As $\M\subseteq \C$, it is enough to show
  $P\in \M$.  Note that $\H_0:=(I-P)\H_\phi$ is invariant for each
  $U_s$ so that the map $s\mapsto V_s$ is a unitary representation of
  $\Gamma$ on $\H_0$ where
  \[V_s:=U_s|_{\H_0}.\]  Since $\phi$ is an ergodic state, \cite[Theorem
  4.3.17]{BratteliRobinsonOpAlQuStMe1} shows
  $\ran P=\{\xi\in\H_\phi: U_s\xi=\xi\text{ for all } s\in \Gamma\}$
  and no vector in $\H_0$ is
  invariant under each $V_s$.  Thus, since $\Gamma$  has property (T),
  there exists $\eps>0$ and $s_1,\dots, s_n\in \Gamma$  such that for
  all $\xi\in \H_0$ with $\norm{\xi}=1$,
  \begin{equation}\label{noai} \max_{1\leq i\leq n}
    \norm{V_{s_i}\xi-\xi}\geq \eps.
  \end{equation}
  Set $s_0=e$ and let
  \[A:=\frac{1}{n+1} \sum_{j=0}^n U_{s_j}.\]    Obviously,
  $\norm{A}\leq 1$.   We show that
  $\norm{A|_{\H_0}}<1$.  If not, there is a sequence $\eta_k$ of unit
    vectors in $\H_0$ with $\norm{A\eta_k}\rightarrow 1$.    This means
    that  \[\lim_{k\rightarrow \infty} \frac{1}{(n+1)^2}\sum_{i,j=0}^n
      \innerprod{V_{s_i}\eta_k,V_{s_j}\eta_k} =1.\]   As
    $|\innerprod{V_{s_i}\eta_k,V_{s_j}\eta_k}|\leq 1$ for $i,j, k$ and
    $1$ is an extreme point of the unit disc, we
    conclude that for each $0\leq i,j\leq n$,
    \[\lim_k \innerprod{V_{s_i}\eta_k,V_{s_j}\eta_k}=1.\]  Therefore,
    for $1\leq i\leq n$, we obtain
    $\lim_{k\rightarrow\infty}\norm{V_{s_i}\eta_k-\eta_k}=0$,  contradicting~\eqref{noai}.

    As $\norm{A|_{\H_0}}<1$, $A^n\rightarrow P$.  So $P\in \M$, as
    desired.
\end{proof}

\subsection{A Little Measure Theory}

We will also need a few (almost-surely known) facts about regular Borel measures. We provide the proofs for the reader's convenience. We note that in many cases of interest, for example when $X$ is a compact metric space, every Borel measure on $X$ is regular---see~\cite[Sec.\ 21.5, Theorem~14]{RoydenReAn} (recall that regular Borel measures are also called Radon measures~\cite[footnote, p.~456]{RoydenReAn}).

\begin{lemma}\label{RuyMeasure}
Let $\mu $ be a finite, regular Borel measure on a locally compact Hausdorff space $X$. If the Borel set $A \subseteq X$ is an atom for $\mu$, then there is a point $x_0 \in A$ such that $A = \{x_0\} \cup N$, and $\mu(N) = 0$.
\end{lemma}

\begin{proof}
By regularity, there exists a compact set $K \subseteq A$, with $\mu(K) > \mu(A)/2$. Since $A$ is an atom, it follows    that $\mu(K) = \mu(A)$, whence $K$ is also an atom. So we may assume without loss of generality that $K = A$, and hence that $A$ is a compact subset of $X$.

Let $\nu$ be the restriction of $\mu$ to a measure on $A$, and let $S$ be the support of $\nu$, that is, $S$ is the smallest closed subset of $A$ with $\nu(A \setminus S) = 0$. It follows that $\nu(S) = \nu(A)$, whence $S$ is also an atom. So we may assume without loss of generality that $S=A$, and hence that $\nu$ has full support.

We next claim that $A$ is a singleton. Arguing by contradiction, suppose that $A$ has more than one point. We may then write $A$ as the union of two proper closed subsets $C_1$ and $C_2$ (e.g.~the complements of open neighborhoods separating two points in $A$). Since $\nu$ has full support it follows that $\nu(C_i) < \nu (A)$, so $\nu(C_i) = 0$, because $A$ is an atom. Consequently $\nu(A) \leq \nu(C_1) + \nu(C_2) = 0$, a contradiction.
\end{proof}

\begin{proposition} \label{non-atomic}
Let $(X,\Gamma,\alpha)$ be a free transformation group and $\mu$ be an invariant regular Borel probability measure. If $\Gamma$ is infinite, then $\mu$ is non-atomic.
\end{proposition}

\begin{proof}
Suppose that $\Gamma$ is countably infinite and $\mu$ has an atom. By Lemma \ref{RuyMeasure}, there exists $x_0 \in X$ such that $\mu(\{x_0\}) > 0$. By freeness, $sx_0 \neq tx_0$ whenever $s, t \in \Gamma$ and $s \neq t$. Thus
\[
    \mu(\{sx_0: s \in \Gamma\}) = \sum_{s \in \Gamma} \mu(\{sx_0\}) = \sum_{s \in \Gamma} \mu(\{x_0\}) = \infty,
\]
a contradiction.
\end{proof}

\begin{remark}{Remark}\label{erinex}
Let $M(X,\Gamma)$ be the collection of all invariant Borel probability measures for $(X,\Gamma,\alpha)$, and assume $M(X,\Gamma)$ is not empty. Viewing each $\mu\in M(X,\Gamma)$ as a state on $C(X)$, we see that $M(X,\Gamma)$ is convex and weak-$*$ closed, hence weak-$*$ compact. The Krein-Milman theorem assures the existence of an extreme point $\mu\in M(X,\Gamma)$. A standard argument shows $\mu$ is ergodic (\cite[Theorem~6.10(iii)]{WaltersInErTh} gives the argument when $\Gamma=\bbZ$, but the proof applies for general $\Gamma$).
\end{remark}

\section{Main Results}

In this section, we establish the existence of exotic ideals in certain represented free transformation groups. In addition to the results from Section~\ref{prelim}, our arguments will rely on a recent theorem of Elek, which shows that every countably-infinite, non-amenable discrete group $\Gamma$ has a free and minimal action on the Cantor set $X$ which admits an invariant (regular) Borel probability measure $\mu$ \cite[Theorem~1]{ElekFrMiAcCoGrInPrMe}. By Proposition \ref{non-atomic}), $\mu$ is non-atomic. By Remark \ref{erinex}, we may assume that $\mu$ is ergodic. We are grateful to Eduardo Scarparo for alerting us to Elek's paper \cite{ElekFrMiAcCoGrInPrMe}.

\subsection{Existence of Exotic Ideals}

Let $(X,\Gamma,\alpha)$ be a free transformation group. If $\Gamma$ is amenable, then the full and reduced norms on $C_c(\Gamma,C(X))$ coincide, so $\nXG$ is a singleton set. In that case, the unique represented free transformation group, $C(X) \rtimes_f \Gamma$, contains no exotic ideals. In the non-amenable case, we have the following result:

\begin{theorem} \label{exotic ideals exist}
For any countably-infinite, non-amenable discrete group $\Gamma$, there exists a free action of $\Gamma$ on a compact metric space $X$ such that $C(X) \rtimes_f \Gamma$ has an exotic ideal.
\end{theorem}

\begin{proof}
  By \cite[Theorem 1]{ElekFrMiAcCoGrInPrMe}, there exists a free and
  minimal action of $\Gamma$ on the Cantor set $X$ which admits an
  invariant (regular) Borel probability measure $\mu$. By Theorem
  \ref{amenable char}, the full and reduced norms on
  $C_c(\Gamma,C(X))$ do not coincide, so $C(X) \rtimes_f \Gamma$ has
  an exotic ideal, namely the kernel of the canonical $*$-epimorphism,
  $\ker\Lambda_f$.
\end{proof}

\begin{remark}{Remark} \label{frnam} For general $\Gamma$, the proof
  of Theorem \ref{exotic ideals exist} relies on
  \cite[Theorem~1]{ElekFrMiAcCoGrInPrMe} to furnish a free action on a
  compact metric space $X$ which admits an invariant Borel probability
  measure $\mu$. In some cases, for example when $\Gamma = \bbF_2$, such an
  action can be described explicitly and we do so now. Within the group $\so$, let
  $R_z$ be rotation by $\theta = \arccos(1/3)$ about the $z$-axis, and
  let $R_x$ be rotation by $\theta$ about the $x$-axis. It is
  well-known that the subgroup of $\so$ generated by $R_z$ and $R_x$
  is isomorphic to $\bbF_2$, the free group on
  generators.\footnote{Incidentally, this is the subgroup of $\so$
    that was used by Banach and Tarski to prove their well-known
    ``paradox''.} Identifying $\bbF_2$ with this subgroup, we may view
  it as acting on $X := \so$ by left translation. Normalized Haar
  measure on $\so$ is then the desired probability measure.
 \end{remark}

\subsection{Abundance of Exotic Ideals}\label{AbExId}

Let $\Gamma = \bbF_2$, $X = \so$, and $\Gamma \curvearrowright X$ be
the action described in Remark \ref{frnam}. In this setting, we prove
that the inclusion $(C(X) \rtimes_f \Gamma, C(X))$ has uncountably
many distinct exotic ideals. For this, it is clearly enough to produce
uncountably many distinct $C^*$-norms on $C_c(\Gamma,C(X))$. We will
do so using the theory of exotic group algebras and exotic
crossed-products introduced by Brown and Guentner in
\cite{BrownGuentnerNeC*CoDiReSp}. This theory was further developed by
Kaliszewski, Landstad, and Quigg in
\cite{KaliszewskiLandstadQuiggExGrC*AlNoDu}, and also by Buss,
Echterhoff, and Willett in \cite{BussEcterhoffWillettExCrPrBaCoCo}.

Here is a summary of the background material on exotic group algebras
and crossed products we require.  For the moment, we do not assume the
\cstaralg\ $\A$ is abelian.
Recall (e.g., from \cite{BussEcterhoffWillettExCrPrBaCoCo}) that given a discrete (for simplicity) group $\Gamma$, a \emph{crossed-product functor} is a functor $\E$ from the category of $\Gamma$-\cstar-algebras (that is, \cstar-algebras equipped with an action of $\Gamma$) to the category of \cstar-algebras. In symbols, this is usually expressed by
\[
  (\A,\tau) \overset{\E}{\mapsto} \A \rtimes_{\tau,\E} \Gamma,
\]
where $\tau$ is the designated action of $\Gamma$ on $\A$. In addition, it is generally assumed that every $\A \rtimes_{\tau,\E} \Gamma$ is a completion of $C_c(\Gamma,\A)$ under a \cstar-norm $\eta_\E$ lying between the full and reduced norms, and moreover that for every $\Gamma$-covariant morphism
\begin{equation}\label{uphi}
    \varphi: (\A,\tau) \rightarrow (B,\beta),
\end{equation}
the associated morphism
\[
  \E(\varphi): \A \rtimes_{\tau,\E} \Gamma \rightarrow B \rtimes_{\tau,\E} \Gamma
\]
maps the dense copy of $C_c(\Gamma,\A)$ into $C_c(\Gamma,B)$ in the expected way, namely, sending a function $f \in C_c(\Gamma,\A)$ to $\varphi \circ f \in C_c(\Gamma,B)$.\\

From a more pedestrian point of view, a crossed-product functor simply boils down to choosing a \cstar-norm $\eta_\E$ satisfying
\begin{equation} \label{ufact.c}
    \|x\|_r \leq \eta_\E(x) \leq \|x\|_f
\end{equation}
on $C_c(\Gamma,\A)$, for every $\Gamma$-\cstar algebra $(\A,\tau)$, and such that, for every $\Gamma$-covariant $*$-homomorphism $\varphi: (\A,\tau) \to (B,\beta)$, the corresponding map
\begin{equation} \label{CoreFunctor}
    f \in C_c(\Gamma,\A) \mapsto \varphi \circ f \in C_c(\Gamma,B),
\end{equation}
is continuous for the chosen norms.\\

Observe that, if the map $\varphi $ of \eqref{uphi} is a completely positive, covariant map, rather than a $*$-homomorphism, then \eqref{CoreFunctor} still gives a well-defined map. However, there is no guarantee that \eqref{CoreFunctor} is continuous for the chosen norms. In fact, continuity fails for certain crossed-product functors, such as most Brown--Guentner functors (see \cite[Remark 4.19]{BussEcterhoffWillettExCrPrBaCoCo}). On the bright side, the crossed-product functors introduced by Kaliszewski, Landstad, and Quigg, usually referred to as KLQ-functors, all have the \emph{cp-map property}, meaning that \eqref{CoreFunctor} is always continuous and hence extends to a completely positive map between the respective completions (see~\cite[Corollary 4.18 and Theorem 4.9]{BussEcterhoffWillettExCrPrBaCoCo}).\\

The KLQ-functors form a class of crossed-product functors which can be put in a one-to-one correspondence with the set of all weakly closed, $\Gamma$-invariant (for left and right translations) ideals of the Fourier-Stieltjes algebra $B(\Gamma)$ \cite[Proposition 23]{BussEcterhoffWillettExCrPrBaCoCo}.The reader is referred to \cite{KaliszewskiLandstadQuiggExGrC*AlNoDu}
for more details on the KLQ-functors. A fact about KLQ-functors important to us is that if $E \idealin B(\Gamma)$ is a $\Gamma$-invariant ideal and $\E$ is the corresponding crossed-product functor, then $\bbC \rtimes_{\id,\E} \Gamma$ is the quotient of the full group \cstar-algebra $C_f^*(\Gamma)$ by the pre-annihilator of $E$ in $C_f^*(\Gamma)$, once the dual of $C^*_f(\Gamma)$ has been identified with $B(\Gamma)$, as usual. Among the $\Gamma$-invariant ideals of $B(\Gamma)$ are the ideals $E_p$, defined for each $p \in [1,\infty )$ as the weak$^*$-closed ideal of $B(\Gamma)$ generated by all positive-definite functions in $\ell^p(\Gamma)$.\\

In the fundamental paper \cite{OkayasuFrGrC*AlAsWielp}, Okayasu proved that, for the case of the free group on two generators $\bbF_2$, the KLQ-functors associated to the ideals $E_p$ assign pairwise distinct norms to $C_c(\bbF_2)$, for all $p \in [2,\infty)$.\\

Given any dynamical system $(X,\Gamma,\tau)$ admitting an invariant regular Borel probability measure $\mu$, such as the one described in Remark \ref{frnam}, observe that the map
\[
    \varphi: f \in C(X) \mapsto \int_X f\, d\mu \in \bbC
\]
is completely positive and $\Gamma$-covariant. Choosing any crossed-product functor $\E$ satisfying the cp-map property, we
therefore deduce the existence of a completely positive map
\[
    \Phi: C(X) \rtimes_{\tau,\E} \Gamma \rightarrow C^*_\E(\Gamma),
\]
where $C^*_\E(\Gamma )$ denotes $\bbC \rtimes_{\id,\E} \Gamma$, with ``id'' being the trivial action of $\Gamma$ on $\bbC$. On the other hand, relying only on the functoriality of $\E$, we also obtain a $*$-homomorphism
\[
    \iota: C^*_\E(\Gamma ) \to C(X) \rtimes_{\tau,\E} \Gamma,
\]
arising from the natural inclusion of $\bbC$ in the unital algebra $C(X)$, which is clearly a covariant map. Since $\Phi \circ \iota$ is the identity mapping on the dense subalgebra $C_c(\Gamma)$, we conclude that $\Phi \circ \iota$ is the identity map on $C^*_\E(\Gamma)$, whence $\iota $ is one-to-one and the range of $\iota$ is $*$-isomorphic to $C^*_\E(\Gamma)$. Observe that this may be reinterpreted as saying that the norm assigned by $\E$ on $C_c(\Gamma,C(X))$, if restricted to the subalgebra $C_c(\Gamma,\bbC) \simeq C_c(\Gamma)$, coincides with the norm assigned by $\E$ on the latter, once interpreted from the point of view of the dynamical system $(\bbC,\Gamma,\id)$.\\

The crucial conclusion is that, should $\E_1$ and $\E_2$ be two crossed-product functors satisfying the cp-map property, and such that the norms $\eta _{\E_1}$ and $\eta_{\E_2}$ on $C_c(\Gamma)$ do not coincide, then the two crossed-product functors will necessarily yield different norms on $C_c(\Gamma,C(X))$. With these preparations, we now have the following result.

\begin{theorem} \label{NegAnsQThom0}
Let $(X,\bbF_2,\alpha)$ be any free transformation group admitting an invariant probability measure $\mu$, such as the one described in Remark \ref{frnam}. Then there are uncountably many distinct \cstar-norms satisfying \eqref{ufact.c} on $C_c(\bbF_2,C(X))$.
\end{theorem}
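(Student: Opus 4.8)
The plan is to extract the uncountable family of norms directly from Okayasu's theorem, using as a bridge the transfer principle established in the paragraphs immediately preceding the statement; no genuinely new construction is needed beyond correctly assembling these two ingredients.

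Concretely, for each $p \in [2, +\infty)$ I would let $\E_p$ be the KLQ-functor attached to the $\mathbb{F}_2$-invariant ideal $E_p \idealin B(\mathbb{F}_2)$, and I would write $\eta_{\E_p}$ for the C*-norm that $\E_p$ assigns to $C_c(\mathbb{F}_2, C(X))$ upon being applied to the system $(C(X), \alpha)$. Because each $\E_p$ is a bona fide crossed-product functor, every $\eta_{\E_p}$ automatically satisfies~\eqref{ufact.c}; thus the whole content of the theorem reduces to showing that $p \mapsto \eta_{\E_p}$ is injective on $[2, +\infty)$, which, that interval being uncountable, gives the conclusion.

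For injectivity I would argue by restriction to the scalar subalgebra. Okayasu's theorem guarantees that the norms the functors $\E_p$ place on $C_c(\mathbb{F}_2)$ --- equivalently, the norms of $C^*_{\E_p}(\mathbb{F}_2) = \bbC \rtimes_{\id, \E_p} \mathbb{F}_2$ --- are pairwise distinct for $p \in [2, +\infty)$. The \emph{crucial conclusion} already in hand, which rests on the invariant probability measure $\mu$ via the completely positive map $\Phi$ and the embedding $\iota$ with $\Phi \circ \iota = \id$, says exactly that the restriction of $\eta_{\E_p}$ to the copy $C_c(\mathbb{F}_2, \bbC 1_X) \simeq C_c(\mathbb{F}_2)$ coincides with the norm $\E_p$ assigns to $C_c(\mathbb{F}_2)$. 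Consequently, if $p \neq q$ then $\eta_{\E_p}$ and $\eta_{\E_q}$ already disagree on this scalar copy, and hence are distinct norms on all of $C_c(\mathbb{F}_2, C(X))$.

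The genuinely substantive work lies entirely upstream of this bookkeeping: Okayasu's separation of the $E_p$-norms supplies the uncountability, while the cp-map property of the KLQ-functors --- the very property that makes $\Phi$ continuous and renders the transfer principle valid --- supplies the faithful detection of group-algebra norms inside crossed-product norms. The only points I would be careful to verify in the final step are that each $\E_p$ does have the cp-map property, so that the transfer principle applies, and that its restriction to the scalar copy is genuinely the Okayasu norm; since both are exactly the facts quoted above, I anticipate no real obstacle, and the argument closes at once.
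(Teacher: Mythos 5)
Your proposal is correct and takes essentially the same approach as the paper: the paper's own proof consists precisely of citing Okayasu's pairwise-distinctness of the $E_p$-norms on $C_c(\mathbb{F}_2)$ for $p\in[2,+\infty)$ and invoking the transfer principle established just before the theorem (the cp-map property of KLQ-functors plus the invariant-measure map $\Phi$ and embedding $\iota$ with $\Phi\circ\iota=\id$). The only difference is that you spell out explicitly the bookkeeping that the paper compresses into the phrase ``our discussion shows it is enough.''
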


\begin{proof}
Our discussion shows it is enough to prove that there are uncountably many KLQ-functors producing pairwise distinct norms on $C_c(\bbF_2)$. But this is precisely what has been shown by Okayasu in \cite{OkayasuFrGrC*AlAsWielp}, as mentioned above.
\end{proof}

\subsection{Exotic Ideals of Compact Operators}

We have seen in the previous sections that exotic ideals exist
(sometimes in abundance!), but the examples do not give explicit
elements belonging to exotic ideals. The purpose of this section is
to give examples of represented free transformation groups which
contain the compact operators as an exotic ideal.

\begin{theorem} \label{exotic compact}
Let $\Gamma$ be a countably-infinite discrete group with Kazhdan's property (T). Then there exists a represented free transformation group $C(X) \rtimes_\eta \Gamma \subseteq B(\H)$ containing the compact operators $\K(\H)$ as an exotic ideal.
Moreover, if $\{0\}\neq J\idealin C(X)\rtimes_\eta\Gamma$ is a proper  ideal, then
\begin{equation}\label{ExC1}
  \K(\H_\phi)\subseteq J\subseteq \L_\eta.
\end{equation}

\end{theorem}

\begin{proof}
Since $\Gamma$ is non-amenable, there exists a free and minimal action
of $\Gamma$ on the Cantor set $X$ which admits an ergodic (regular,
non-atomic) Borel probability measure $\mu$ \cite[Theorem
1]{ElekFrMiAcCoGrInPrMe}. For $f \in C(X)$, let $\phi(f) = \int_X
f(x)d\mu(x)$, an ergodic state. By Proposition \ref{Koopman facts},
the corresponding Koopman representation $\kappa_\phi:C_c(\Gamma,C(X))
\to B(\H_\phi)$ is injective. Thus the formula $\eta(x) =
\|\kappa_\phi(x)\|$ defines a $C^*$-norm on $C_c(\Gamma,C(X))$, and
the completion $C(X) \rtimes_\eta \Gamma$ can be identified with
$\kappa_\phi(C(X)\rtimes_f\Gamma)\subseteq B(\H_\phi)$. By
Theorem \ref{contains compacts}, $\K(\H_\phi) \subseteq C(X)
\rtimes_\eta \Gamma$. Since $\mu$ is non-atomic, $C(X) \cap
\K(\H_\phi) = \{0\}$, otherwise $C(X)$ would contain a non-zero
finite-rank projection, which is impossible.  Thus $\K(\H_\phi)$ is
an exotic ideal in $C(X)\rtimes_\eta\Gamma$.

Since the compact operators are an essential ideal of $\bh$, there is
no $0\neq T\in C(X)\rtimes_\eta\Gamma$ for which $T\K(\H_\phi)=\{0\}$ or
$\K(\H_\phi) T=\{0\}$.  Thus, the simplicity of $\K(\H_\phi)$ implies that
if $J\idealin C(X)\rtimes_\eta\Gamma$ is a non-zero proper ideal, then
$\K(\H_\phi)\subseteq J$.   Next,
the set $Z:=\{x\in X: f(x)=0 \text{ for every } f\in C(X)\cap J\}$ is a
closed set invariant under the action of $\Gamma$ on $X$.  By minimality of the action,
$Z$ is either empty or $X$.   If $Z=\emptyset$, then $C(X)\cap J=C(X)$, which
implies $J=C(X)\rtimes_\eta \Gamma$, which is impossible
as $J$ is a proper ideal.  Therefore $J\cap C(X)=\{0\}$, so $J$ is an
exotic ideal.   An application of Corollary~\ref{free
  facts}\eqref{ff6} gives~\eqref{ExC1}.

\end{proof}

\begin{remark}{Remark}
The proof of Theorem~\ref{exotic compact} gives somewhat
    more.
  \begin{enumerate}
  \item  If $C_c(\Gamma, C(X))$ is identified with its image under
    the Koopman representation, then
    \begin{equation}\label{ec1}
      C_c(\Gamma,C(X))\cap \K(\H_\phi)=\{0\}.
    \end{equation}
    To see this, note that \eqref{EFaith} shows that the conditional expectation
    $E_\eta:C(X)\rtimes_\eta\Gamma\rightarrow C(X)$ is faithful on
    $C_c(\Gamma,C(X))$.   The fact that $\K(\H_\phi)\subseteq \{x\in
    C(X)\rtimes_\eta\Gamma: E_\eta(x^*x)=0\}$ gives~\eqref{ec1}.
    \item The fact that $\norm{\cdot}_r\leq \eta$,  implies that the
      canonical surjection $\theta: C(X)\rtimes_\eta\Gamma\rightarrow
      C(X)\rtimes_r\Gamma$ satisfies
      \begin{equation}\label{cc2} C_c(\Gamma,C(X))\cap \ker\theta=\{0\}.
      \end{equation}
      \end{enumerate}
\end{remark}
We conclude this section with some questions.

\begin{remark}{Question} \label{mudepend}
It follows from \cite[Corollary~1.1]{ElekFrMiAcCoGrInPrMe} that the free dynamical system $(X,\Gamma,\alpha)$ may be chosen so that it admits multiple invariant ergodic Borel probability measures. Supposing that $\mu_1$ and $\mu_2$ are such measures, let $\eta_1$ and $\eta_2$ be the norms arising from the corresponding Koopman representations. Are the \cstaralg s $C(X) \rtimes_{\eta_1} \Gamma$ and $C(X) \rtimes_{\eta_2} \Gamma$ isomorphic?
\end{remark}

\begin{remark}{Question} \label{ExoticCartan}
In the context of Theorem \ref{exotic compact}, does $C(X) \rtimes_\eta \Gamma$ contain a Cartan MASA?
\end{remark}

\begin{remark}{Question}
  Again in the context of Theorem \ref{exotic compact}, does
  $C(X) \rtimes_\eta \Gamma$ contain a non-unitary isometry? We note
  that $C(X) \rtimes_r \Gamma$ has a faithful trace, so every isometry
  in $C(X) \rtimes_r \Gamma$ is unitary.
\end{remark}

\begin{remark}{Question} \label{KMax2?}  We continue with the setting of Theorem
  \ref{exotic compact}.
  Are the compact operators the only exotic
  ideal of $C(X) \rtimes_\eta \Gamma$?  In other words, we ask: is
  $\L_\eta=\K(\H_\phi)$?
\end{remark}

\section{A Nontrivial Opaque Ideal} \label{SecOpId}

We begin this section by briefly describing the opaque ideal, introduced in \cite[Definition 2.10.4]{ExelPittsChGrC*AlNoHaEtGr}.
Let $(\B,\A)$ be a regular inclusion with $\A$ abelian, and let $X$ be the spectrum of $\A$. Therefore $X$ is a compact Hausdorff space and $\A$ is $*$-isomorphic to $C(X)$. Hence we may, and will, identify $\A$ with $C(X)$ throughout.

For each $x$ in $X$, denote
\[
    J_x = \{f \in \A: f(x) = 0\};
\]
this is a maximal ideal of $\A$.

In what follows we will often refer to the set $J_x\B$. The reader may think of this as the closed linear subspace of $\B$ spanned by the set of products $\{ab: a\in J_x, b \in \B\}$. Actually, the Cohen-Hewitt Factorization Theorem \cite[32.22]{HewittRossAbHaAnII} shows that taking the closed linear span is unnecessary, meaning that
\[
    \{ab: a \in J_x, b \in \B\} = \overline\spn~\{ab: a \in J_x, b \in \B\}.
\]

By \cite[Proposition 2.10.4]{ExelPittsChGrC*AlNoHaEtGr} the set
\[
    \Delta := \bigcap_{x \in X} J_x\B
\]
is an ideal of $\B$, called the \emph{opaque ideal}, and moreover
\[
    \Delta = \bigcap _{x \in X} J_x\B = \bigcap_{x \in X} \B J_x.
\]
By \cite[Proposition 2.10.11]{ExelPittsChGrC*AlNoHaEtGr}, $\Delta$ has trivial intersection with $\A$, so it is an example of an exotic ideal whenever it is non-zero.

To get a feeling for the opaque ideal, suppose that $\B$ is abelian, and hence $\B=C(Y)$ for some compact Hausdorff space $Y$. In addition there is a continuous surjective map
\[
    j:Y \to X
\]
describing the inclusion of $\A$ in $\B$ (up to the relevant isomorphisms) by means of the formula
\[
    f \in C(X) \mapsto f \circ j \in C(Y), \qquad (f \in C(X)).
\]
It is not hard to see that, given $x$ in $X$, the set $J_x\B$ turns out to be the ideal formed by the functions in $C(Y)$ vanishing on the fibre $j^{-1}(\{x\})$. Thus, if $f$ lies in $\Delta $, we have that $f$
vanishes on $j^{-1}(\{x\})$, for every $x$ in $X$, but since
\[
    Y = \bigcup _{x\in X} j^{-1}(\{x\}),
\]
one sees that $f$ vanishes everywhere on $Y$. In other words, the commutativity of $\B$ implies that $\Delta = \{0\}$.

Besides the abelian case, there are many other settings for which $\Delta =\{0\}$, so the non-vanishing of the opaque ideal should perhaps be thought of as an anomaly. Nevertheless, our main goal in this section to exhibit precisely such an anomaly, that is, a regular inclusion admitting a nonzero opaque ideal.

There is another relevant ideal of $\B$, called the \emph{gray ideal}, defined by
\[
    \Gamma := \bigcap_{x \in F} J_x\B,
\]
where $F$ is the set of points $x$ in $X$ (known as the \emph{free points}), for which the evaluation functional
\[
    \text{ev}_x: f\in C(X) \mapsto f(x) \in {\mathbb C},
\]
admits a unique pure state extension to $\B$. Just like the opaque ideal, one also has that
\[
    \Gamma = \bigcap_{x \in F} J_x\B = \bigcap_{x \in F} \B J_x.
\]
See \cite[Proposition 2.10.8]{ExelPittsChGrC*AlNoHaEtGr} for more details.

Under the assumption that every point of $X$ is free, one clearly has that $\Gamma = \Delta$. This situation is exemplified by the inclusion of $C(X)$ into a represented free transformation group $C(X) \rtimes_\eta G$. The promised example may now be described:

\begin{theorem} \label{non-trivial-opaque-ideal}
  Let $G $ be a non-amenable discrete group and let
  $(X, G , \alpha )$ be any freely acting discrete dynamical
  system admitting an invariant probability measure, such as
  Example~\ref{frnam},  above.  Then the opaque and gray ideals relative to
  the inclusion
  $$
  \big (C(X)\rtimes _f G ,C(X)\big )
  $$
  coincide and are non-zero.
  \end{theorem}

\begin{proof}
  As already noticed $\Delta =\Gamma $, so it suffices to prove that
$\Gamma \neq \{0\}$.

  By Theorem \ref{amenable char} we have that the full and reduced norms on $C_c(G,C(X))$ do not coincide,   as
already argued in the proof of Theorem \ref{exotic ideals exist}, so
the kernel of the  canonical
  $*$-epimorphism
  $\Lambda_f:C(X) \rtimes_f G \twoheadrightarrow C(X) \rtimes_r G$
  is nontrivial.
  Employing \cite[Proposition 3.5.9(i)]{ExelPittsChGrC*AlNoHaEtGr} we then have that
  $$
  \Gamma =\ker(\Lambda_f)\neq \{0\},
  $$
  concluding the proof.
  \end{proof}

It seems to us that non-amenability is the key ingredient in the above result, so one might wonder about the existence of examples not relying on the lack of amenability. Of course not all regular inclusions arise from groups acting on topological spaces so, in order to make sense of the previous sentence, we need to replace amenability by some condition that makes sense for inclusions not arising from group actions. Based on \cite[Theorem 2.6.8]{BrownOzawaC*AlFiDiAp} it seems sensible to replace amenability with nuclearity, so we ask:

\begin{question}\label{NRQ}
Let $(\B,\A)$ be a regular inclusion with $\A$ abelian and $\B$ nuclear. Must the associated opaque ideal vanish?
\end{question}

Under the extra hypothesis that $(\B,\A)$ has the extension property,
Question~\ref{NRQ} has been answered affirmatively in~\cite[Proposition~4.2]{ExelOnKuC*DiOpId}

\section{Applications}

The existence of exotic ideals in represented free transformation groups can be used to give examples of phenomena and
answer other questions in the literature. The purpose of this section is to highlight three additional applications along these lines.

\subsection{Thomsen's Questions} \label{ThomsenSect}

In \cite[Remark~16]{ThomsenOnFrTrGpC*Al}, Thomsen asks the following two questions, which at least partially motivated our study in the first place:

\begin{question} \label{TQuests}
\begin{enumerate}
\item\label{TQuest0} Does there exist a represented free transformation group which does not arise from a full or reduced crossed product?
\item\label{TQuest} Is the conditional expectation of a represented free transformation group always faithful?
\end{enumerate}
\end{question}

Theorem \ref{NegAnsQThom0} answers Question \ref{TQuest0} affirmatively, which in turn answers Question \ref{TQuest} negatively. Alternatively, Theorem \ref{exotic ideals exist} emphatically answers Question \ref{TQuest} (negatively, of course).

\subsection{Katavolos and Paulsen's Question} \label{KPSect}

Suppose $\H$ is a separable Hilbert space and $\fA$ is a MASA in $\bh$. Katavolos and Paulsen \cite{KatavolosPaulsenOnRaBiPr} studied ranges of certain idempotent $\fA$-bimodule mappings of $\bh$ into itself. The norm-closed linear span of the normalizers of $\fA$ is a \cstaralg\ $\fB$, and Katavolos and Paulsen show that the collection of bimodule maps they consider leave $\fA$, $\fB$, and $\fB+\K$ invariant; furthermore, the restrictions of the bimodule maps to these algebras form a commutative algebra \cite[Proposition 15]{KatavolosPaulsenOnRaBiPr}. As part of their work, in the case when $\fA$ is a non-atomic MASA, they ask whether $\fB$ contains the compact operators $\K(\H)$ \cite[Remark~13]{KatavolosPaulsenOnRaBiPr}). Just before \cite[Proposition~15]{KatavolosPaulsenOnRaBiPr}, they  implicitly conjecture that it does. The purpose of this section is to confirm their intuition. (It then follows readily that the inclusion $\K\subseteq \fB$ still holds if the non-atomic MASA $\fA$ is replaced with any MASA in $\bh$.) Represented free transformation groups will be a tool for this.\\

\begin{corollary}[of Theorem \ref{exotic compact}] \label{inbh}
Let $\H$ be a separable, infinite-dimensional Hilbert space, and let $\fA\subseteq \bh$ be a non-atomic MASA. Put $\fB_0=\spn~\N(\bh,\fA)$ and $\fB=\overline\spn~\N(\bh,\fA)$. Then the following statements hold:
\begin{enumerate}
\item $\fB$ contains the compact operators $\K(\H)$, yet $\fB_0 \cap \K(\H) = \{0\}$.
\item $(\fB,\fA)$ is a regular MASA inclusion having the extension property.
\item There is a unique conditional expectation $E:\fB \rightarrow \fA$, and $E$ annihilates $\K(\H)$.
\end{enumerate}
\end{corollary}

\begin{proof}
Keep the notation from Theorem \ref{exotic compact} and its proof. Since any two non-atomic MASAs on a separable Hilbert space are unitarily equivalent, we may assume that $\fA = \kappa_\phi(C(X))'' \subseteq B(\H_\phi)$. For every $s \in \Gamma$, $\kappa_\phi(s)\kappa_\phi(C(X))\kappa_\phi(s)^* = \kappa_\phi(C(X))$; it follows that $\fA$ is invariant under $\Ad_{\kappa_\phi(s)}$. Therefore $\K(\H_\phi) \subseteq C(X) \rtimes_\eta \Gamma \subseteq \fB$. By \cite[Theorem~2.21]{PittsStReInI}, $(\fB,\fA)$ is a regular MASA inclusion with the extension property, so there is a unique conditional expectation $E:\fB \rightarrow \fA$. Since $\fA$ has no atoms, it cannot contain any finite-rank
projections, and therefore cannot contain any non-zero compact operator. Thus \cite[Theorem~3.15]{PittsStReInI} shows $\K(\H_\phi)$ is contained in the ideal $\L := \{x \in \fB: E(x^*x) = 0\}$, and therefore $E$ annihilates $\K(\H_\phi)$. An
application of \cite[Lemma~7.3]{PittsStReInI} yields $\fB_0 \cap \K(\H_\phi) = \{0\}$.
\end{proof}

\begin{remark}{Remark}\label{useunitary}
The \cstaralg\ $\fB$ found in Corollary~\ref{inbh} can also be described as the closed linear span of the \textit{unitary} normalizers of $\fA$. This follows from the polar decomposition and the proof of \cite[p.~479, inclusion~(2.8)]{CameronPittsZarikianBiCaMASAvNAlNoAlMeTh}. Write
$\fA \simeq L^\infty(\Omega,\mu)$ and take $\Gamma_\fA$ to be the group of \textit{all} measure preserving transformations of $(\Omega,\mu)$. Elements of this group correspond under the Koopman representation to unitary normalizers of $\fA$. Thus $\fB$ is a represented transformation group, although the action of $\Gamma_\fA$ on $\widehat{L^\infty(\Omega,\mu)}$ is not free.
\end{remark}

\begin{remark}{Question}
For the inclusion $(\fB,\fA)$ of Corollary \ref{inbh}, is $\K(\H_\phi)$ the left kernel of the conditional expectation $E:\fB \rightarrow \fA$?
\end{remark}

\subsection{Regularity is Not Hereditary from Above}

Consider three unital \cstaralg s with $\A \subseteq \B \subseteq \C$. To say \emph{regularity is hereditary from above} means that regularity of $(\C,\A)$ implies regularity of $(\B,\A)$. Whether or not regularity is hereditary from above was among a list of open problems in \cite{PittsZarikianUnPsExC*In}.\\

In general, regularity is not hereditary from above, as can be seen from \cite[Example~5.1]{BrownExelFullerPittsReznikoffInC*AlCaEm}. Indeed, that example gives an inclusion $(\C,\A)$ in which $\A$ is a Cartan MASA in $\C$ (so $(\C,\A)$ is regular) and an intermediate subalgebra $\B$ for which $\A$ is not Cartan in $\B$. Note that the restriction $E|_\B$ of the conditional expectation $E:\C \rightarrow \A$ is a faithful conditional expectation of $\B$ onto $\A$, and since $\A$ is a MASA in $\C$, $\A$ is necessarily a MASA in $\B$. Thus $(\B,\A)$ cannot be regular, for if it were, $\A$ would also be Cartan in $\B$.\\

We will use Proposition~\ref{regnotHerid} below and the existence of exotic ideals in represented free transformation groups to produce other examples showing regularity is not hereditary from above. These examples differ from \cite[Example~5.1]{BrownExelFullerPittsReznikoffInC*AlCaEm} in at least three ways: they have the extension property, they have no \textit{faithful} conditional expectation present, and the intermediate inclusion is singular.

\begin{proposition}\label{regnotHerid}
Suppose $(\C,\D)$ is a unital MASA inclusion. If $\{0\} \neq J \idealin \C$ satisfies $J \cap \D = \{0\}$ (i.e., if $J$ is an exotic ideal), then $(\D+J,\D)$ is a singular inclusion.
\end{proposition}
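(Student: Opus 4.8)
The plan is to prove the inclusion is singular by showing $\N(\D+J,\D)=\D$ directly. First I would record the elementary structural facts. The sum $\D+J$ is a unital $C^*$-subalgebra of $\C$: it is the preimage, under the quotient map $\C\to\C/J$, of the (closed) $C^*$-subalgebra which is the image of $\D$, namely $q^{-1}(q(\D))=\D+J$, and it contains the common unit. Because $\D\cap J=(0)$, each $v\in\D+J$ has a \emph{unique} decomposition $v=d+j$ with $d\in\D$, $j\in J$, and $v\in\D$ precisely when $j=0$. The inclusion $\D\subseteq\N(\D+J,\D)$ is automatic since $\D$ is abelian, so everything reduces to showing: if $v=d+j$ normalizes $\D$, then $j=0$. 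Such a $v$ is in particular a normalizer of $\D$ inside the MASA inclusion $(\C,\D)$, so I may use the normalizer calculus in $\C$.

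The heart of the argument is to exploit $\D\cap J=(0)$ to force rigid identities. Since $1\in\D$, both $vv^*$ and $v^*v$ lie in $\D$; expanding $vv^*=dd^*+(dj^*+jd^*+jj^*)$, the bracketed term lies in $J$ while $vv^*,dd^*\in\D$, so it lies in $\D\cap J=(0)$. Hence $vv^*=dd^*$ and, symmetrically, $v^*v=d^*d$; as $\D$ is commutative these coincide, and I set $p:=v^*v=vv^*=dd^*\in\D$. The same cancellation applied to $vav^*$ gives $vav^*=dad^*=pa$ for every $a\in\D$. Computing $vav^*v$ two ways --- as $va(v^*v)=v(pa)$ and as $(vav^*)v=(pa)v$ --- yields $v(pa)=(pa)v$, so $v$ commutes with $p\D$; subtracting the trivial commutation of $d\in\D$ with $p\D$ shows that $j$ commutes with $pa$ for all $a\in\D$, hence with the closed ideal $\overline{p\D}\subseteq\D$.

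I expect the main obstacle to be that $p$ need not be invertible, so that commuting with $\overline{p\D}$ controls $j$ only over the open support of $p$, not on all of $\D$. To remove this I would pass to the bidual $\C^{**}$ and let $z\in\D^{**}$ be the support projection of $p$, the strong-$*$ limit of an approximate unit $(e_\lambda)\subseteq\overline{p\D}$. Since $z$ is simultaneously the source and range projection of $v$ and of $d$ (both have $p$ as their modulus squared), one gets $v=zvz$ and $d=zdz$, whence $zj=jz=j$. For fixed $a\in\D$ the elements $ae_\lambda$ lie in $\overline{p\D}$, so $j(ae_\lambda)=(ae_\lambda)j$; passing to the strong-$*$ limit gives $j(az)=(az)j$, and combining this with $zj=jz=j$ and $az=za$ collapses the identity to $ja=aj$. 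Thus $j\in\D'\cap\C=\D$, where the MASA hypothesis enters decisively, and since $j\in J$ we conclude $j\in\D\cap J=(0)$. Therefore $v=d\in\D$, so $\N(\D+J,\D)=\D$ and the inclusion is singular.
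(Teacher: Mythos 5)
Your proof is correct, and it takes a genuinely different route from the paper's. The paper works with intertwiners rather than normalizers: writing an intertwiner $v=d+j$ and splitting the relation $vh=h'v$ along the decomposition $\D\oplus J$ (using $\D\cap J=(0)$ exactly as you do), it deduces that $j$ is itself an intertwiner of $(\D+J,\D)$; it then invokes \cite[Proposition~3.3]{DonsigPittsCoSyBoIs} (in a MASA inclusion, intertwiners are normalizers) to get $j\D j^*\subseteq \D\cap J=(0)$, hence $jj^*=0$ and $j=0$, and finally \cite[Proposition~3.4]{DonsigPittsCoSyBoIs} (for MASA inclusions, $\N$ is the closure of $\INT$) to pass from $\INT(\D+J,\D)=\D$ to $\N(\D+J,\D)=\D$. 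You instead attack normalizers directly: the same cancellation trick yields the exact identities $vv^*=dd^*=v^*v=d^*d=p$ and $vav^*=dad^*=pa$, from which $j$ commutes with the ideal $\overline{p\D}$, and you dispose of the non-invertibility of $p$ by a support-projection argument in the bidual (using $v=zvz$, $d=zdz$, and joint strong-$*$ continuity of multiplication on bounded sets), concluding $j\in\D'\cap\C=\D$ from the MASA hypothesis and hence $j\in\D\cap J=(0)$. What each approach buys: the paper's argument is shorter but outsources the two key facts about the intertwiner/normalizer relationship to Donsig--Pitts; yours is self-contained modulo standard von Neumann algebra facts, never needs the notion of intertwiner, and establishes $\N(\D+J,\D)=\D$ directly without a closure step. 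Both proofs run on the same engine, namely the uniqueness of the decomposition $v=d+j$ forced by $\D\cap J=(0)$, which converts approximate relations into exact componentwise identities.
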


\begin{proof}
Let $v$ be an intertwiner for $(\D+J,\D)$ and write $v=d+j$ with $d \in \D$ and $j \in J$. Then for any $h \in \D$, there exists $h' \in \D$ such that $vh=h'v$; that is, $dh+jh=h'd+h'j$. This gives $dh-h'd=h'j-jh$. As $J \cap \D = \{0\}$, $h'j=jh$. It follows that $j\D \subseteq \D j$. Similarly, $\D j \subseteq j\D$, so $j \in \INT(\D+J,\D)$. Since $\D$ is a MASA in $\D+J$,~\cite[Proposition~3.3]{DonsigPittsCoSyBoIs} shows $j$ is a normalizer. Thus $j\D j^* \subseteq J \cap \D$. Since
$1 \in \D$, we obtain $jj^*=0$, that is, $j=0$. This shows that $\INT(\D+J,\D)=\D$. By \cite[Proposition~3.4]{DonsigPittsCoSyBoIs}, $\D=\overline{\INT(\D+J,\D)}=\N(\D+J,\D)$. Thus, $(\D+J,\D)$ is a singular inclusion.
\end{proof}

\begin{corollary}
If $C(X) \rtimes_\eta \Gamma$ is a represented free transformation group with an exotic ideal $J$, then $(C(X) \rtimes_\eta \Gamma,C(X))$ is regular MASA inclusion with the extension property and a unique conditional expectation which is not faithful, but $(C(X)+J,C(X))$ is a singular inclusion.
\end{corollary}


\def\cprime{$'$}
\providecommand{\bysame}{\leavevmode\hbox to3em{\hrulefill}\thinspace}
\providecommand{\MR}{\relax\ifhmode\unskip\space\fi MR }
\providecommand{\MRhref}[2]{%
  \href{http://www.ams.org/mathscinet-getitem?mr=#1}{#2}
}
\providecommand{\href}[2]{#2}

\end{document}